\def\ex{{\rm ex}}
\newcommand{\floor}[1]{\left\lfloor #1 \right\rfloor}
\newcommand\degvec[1]{\langle #1 \rangle}
\newcommand{\N}{\mathbb{N}}
\newtheorem{theorem}{Theorem}[section]
\newtheorem{lemma}[theorem]{Lemma}
\theoremstyle{definition}
\newtheorem{definition}[theorem]{Definition}
\title{Improved Upper Bound on the Linear Turán Number of the Crown}
\date{\today}
\author[W. Fletcher]{Willem Fletcher}
\address{Department of Mathematics and Statistics, Carleton College, Northfield, MN, USA}
\email{willemrfletcher@gmail.com}
\begin{document}
    
    \begin{abstract}
        A linear $3$-graph is a set of vertices along with a set of edges, which are three element subsets of the vertices, such that any two edges intersect in at most one vertex. The crown, $C$, is a specific $3$-graph consisting of three pairwise disjoint edges, called jewels, along with a fourth edge intersecting all three jewels. For a linear $3$-graph, $F$, the linear Turán number, $ex(n,F)$, is the maximum number of edges in any linear $3$-graph that does not contain $F$ as a subgraph. Currently, the best known bounds on the linear Turán number of the crown are
        \[ 6 \floor{\frac{n - 3}{4}} \leq \ex(n, C) \leq 2n. \]
        In this paper, the upper bound is improved to $ex(n,C) < \frac{5n}{3}$.
    \end{abstract}
    
    \maketitle
    
    \section{Introduction}
    
    A \textit{$3$-graph}, $H$, is a non-empty set, $V(H)$, whose elements are called vertices, along with a set, $E(H)$, of $3$-element subsets of $V(H)$ called edges. A \textit{linear} $3$-graph is a $3$-graph where any two edges intersect in at most one vertex. This paper only concerns linear $3$-graphs, and for the remainder of it, $3$-graph will be used to mean linear $3$-graph.
    
    If $H$ and $F$ are $3$-graphs, then $H$ is \textit{$F$-free} if it has no subgraph isomorphic to $F$. For a $3$-graph, $F$, and $n \in \N$, the linear Turán number $\ex(n,F)$ is the greatest number of edges in any $F$-free 3-graph on $n$ vertices.
    
    A $3$-tree is a $3$-graph that can be constructed as follows: start with a single edge and add edges one at a time so that each new edge intersects exactly one other edge when it is added. This paper concerns a specific $3$-tree called the crown or $C$. It consists of three pairwise disjoint edges, called jewels, and one edge, called the base, intersecting all three jewels (See figure \ref{fig-crown}). The crown was named in \cite{CFGGWY} where it was proven that every $3$-graph with minimum vertex degree $\delta(H) \geq 4$ contains a crown.
    
    \begin{figure}[H]
        \centering
        \includegraphics[width=0.25\linewidth]{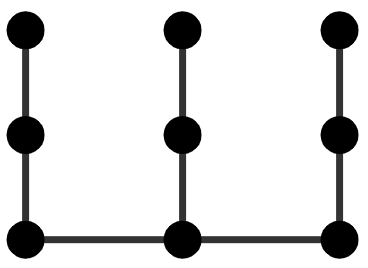}
        \caption{The crown}
        \label{fig-crown}
    \end{figure}
    
    In \cite{GYRS}, A. Gy\'arf\'as, M. Ruszink\'o, and  G. N. S\'ark\"ozy initiated the study of linear Tur\'an numbers for acyclic $3$-graphs, focusing on paths, matchings, and small $3$-trees. The crown is the smallest $3$-tree whose linear Turán number is not known. They provided the bounds
    
    $$ 6 \floor{\frac{n - 3}{4}} \leq \ex(n, C) \leq 2n. $$
    The purpose of this paper is to improve the upper bound.
    
    \begin{theorem} \label{main-theorem}
       For $n \geq 1$, $ex(n,C) < \frac{5n}{3}.$
    \end{theorem}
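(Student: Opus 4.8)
The plan is to distill crown-freeness into a condition on the degrees of the three vertices of each edge, and then to turn that into a global bound on $m = |E(H)|$ by summing a suitable weight over the edges. First I would fix an edge $e = \{a,b,c\}$ of a crown-free $H$, ordered so that $d(a) \le d(b) \le d(c)$, and try to grow a crown with base $e$ greedily. If $d(a) \ge 2$ we may pick a jewel $f_a \ni a$ with $f_a \ne e$; by linearity $f_a$ meets $e$ only in $a$, so it has two vertices outside $e$, and each of those lies in at most one edge through $b$. Hence $f_a$ meets at most two of the $d(b)-1$ edges through $b$ other than $e$, so if $d(b) \ge 4$ we may pick a jewel $f_b \ni b$ disjoint from $f_a$. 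Now $f_a \cup f_b$ contains four vertices outside $e$, each in at most one edge through $c$, so at most four of the $d(c)-1$ edges through $c$ other than $e$ meet $f_a \cup f_b$; if $d(c) \ge 6$ we may pick a jewel $f_c \ni c$ disjoint from both, yielding a crown with base $e$ and jewels $f_a, f_b, f_c$. Therefore, in a crown-free $H$, every edge $\{a,b,c\}$ with degrees $\alpha \le \beta \le \gamma$ (in nondecreasing order) satisfies $\alpha \le 1$ or $\beta \le 3$ or $\gamma \le 5$.

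Next I would use the exact identity $\sum_{e \in E(H)} \sum_{v \in e} \frac{1}{d(v)} = |\{v \in V(H) : d(v) \ge 1\}| \le n$. The degree condition above forces $\frac{1}{d(a)} + \frac{1}{d(b)} + \frac{1}{d(c)} \ge \frac{3}{5}$ for every edge: if $\alpha \le 1$ the sum is at least $1$; if $\alpha \ge 2$ and $\beta \le 3$ then it is at least $\frac{1}{3} + \frac{1}{3} > \frac{3}{5}$; and otherwise $\gamma \le 5$, so all three degrees lie in $\{2,3,4,5\}$ and the sum is at least $\frac{3}{5}$, with equality exactly when $d(a) = d(b) = d(c) = 5$. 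Summing over all edges gives $\frac{3}{5}m \le n$, i.e. $m \le \frac{5}{3}n$, with equality only if $H$ has no isolated vertices and every edge has all three endpoints of degree $5$ — that is, $H$ is $5$-regular. Since $m$ is an integer, this already yields $m < \frac{5}{3}n$ whenever $3 \nmid n$, so it remains only to rule out a $5$-regular crown-free linear $3$-graph.

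I expect this last point to be the main obstacle. Here I would fix an edge $e = \{a,b,c\}$ of a hypothetical $5$-regular crown-free $H$ and study the three size-$4$ matchings $M_a, M_b, M_c$ that the other edges through $a$, $b$, $c$ trace on their non-$e$ vertices. By linearity of $H$ these matchings are pairwise edge-disjoint, and a crown with base $e$ corresponds exactly to a rainbow matching of size $3$ among $M_a, M_b, M_c$; so crown-freeness forbids one. I anticipate that three pairwise edge-disjoint size-$4$ matchings with no rainbow matching of size $3$ must in fact be three perfect matchings on a common $8$-set $W$ and, after relabeling, be the color classes of the proper $3$-edge-coloring of two vertex-disjoint copies of $K_4$ on $W$. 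In that configuration every one of the $\binom{4}{2} = 6$ pairs inside each $K_4$ is already a blade of an edge of $H$, so applying the same rigidity to a "child" edge $\{a,x,y\}$ with $\{x,y\} \in M_a$ would force two more edges through $x$ whose blades are disjoint pairs lying inside one such saturated $4$-set — impossible by linearity. That contradiction would complete the proof.

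The two steps I expect to require care are the precise classification of the no-rainbow-matching configuration of the three matchings and the bookkeeping in the child-edge argument (checking that it is genuinely forced into an already-saturated half); the greedy lemma and the weighting step should be routine.
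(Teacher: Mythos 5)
Your proposal is correct, and it takes a genuinely different and in places cleaner route than the paper. You share two ingredients with the paper: the greedy lemma (an edge with degree vector at least $\degvec{6,4,2}$ sits under a crown, which is Lemma \ref{lemma-1}), and the classification of the link graph of a $\degvec{5,5,5}$ edge as two properly $3$-edge-colored copies of $K_4$ (your anticipated rigidity statement is exactly Lemma \ref{triple-five-lemma}, and it does hold: each color-$i$ edge must meet exactly two color-$j$ edges, forcing two alternating $4$-cycles whose diagonals carry the third color). The divergence is in how the local constraint is globalized. The paper works with a minimal counter-example and the weight $s(e)=\sum_{v\in e}d(v)$, which is badly behaved at high-degree vertices and therefore needs the truncation $s^*$, the set $L(H)$ of vertices of degree at least $9$, and the discharging computation of Lemma \ref{lemma-2}. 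You instead weight each edge by $\sum_{v\in e}1/d(v)$, which double-counts to the number of non-isolated vertices and is automatically insensitive to large degrees; your case analysis ($\alpha\le 1$, or $\beta\le 3$, or $2\le\alpha\le\beta\le\gamma\le 5$) correctly gives the per-edge lower bound $3/5$ with equality only at $\degvec{5,5,5}$, so $|E(H)|\le 5n/3$ falls out in one line with no induction and no minimality, and strictness reduces to excluding a $5$-regular crown-free linear $3$-graph. Your child-edge argument there is sound: applying the classification to $f=\{a,v_1,v_2\}$ forces the halves of $G(f)$ to be $\{b,c,v_3,v_4\}$ and $\{v_5,\dots,v_8\}$, so the two remaining edges through $v_1$ must trace a perfect matching of $\{v_5,\dots,v_8\}$ other than $\{v_5,v_6\},\{v_7,v_8\}$, and every such pair is already the blade of an edge through $b$ or $c$, violating linearity. (The paper instead disposes of the $\degvec{5,5,5}$ case by minimality: either the $11$-vertex neighborhood is closed, contradicting Lemma \ref{lemma-inductive}, or an escaping edge completes a crown.) Your route buys a self-contained, induction-free proof that in fact yields $\ex(n,C)\le\floor{5n/3}$; the paper's route only needs a single $\degvec{5,5,5}$ edge in a minimal counter-example rather than global $5$-regularity, but pays for it with the $T^*$ machinery.
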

    
    The rest of this paper is dedicated to the proof of Theorem \ref{main-theorem}.
    
    \section{Preliminary Results and Notation}
    
    Let $H$ be a $3$-graph and $v \in V(H)$. The number of edges containing $v$ is the \textit{degree} of $v$ and is written $d(v)$. For an edge $e = \{a,b,c\} \in E(H)$, the $\textit{degree vector}$ of $e$ is $D(e) = \degvec{d(a),d(b),d(c)}$ where the coordinates are in non-increasing order. Define a partial order on the degree vectors by $D(e) \geq D(f)$ if $D(e)$ is greater than or equal to $D(f)$ in all three coordinates. The degree vector of an edge can be useful for finding a crown in a $3$-graph using the following result mentioned in \cite{CFGGWY} and \cite{GYRS}.
    
    \begin{lemma} \label{lemma-1}
        If $H$ is a $3$-graph and there is an edge $e \in E(H)$ such that $D(e) \geq \degvec{6,4,2}$, then $H$ is not crown-free.
    \end{lemma}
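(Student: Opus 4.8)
The plan is to use the given edge $e = \{a,b,c\}$, with $d(a) \ge 6$, $d(b) \ge 4$, $d(c) \ge 2$, as the base of a crown, and to build the three jewels greedily, choosing an edge through $c$ first, then one through $b$, then one through $a$. Processing the vertices in order of increasing degree is exactly what lets each newly chosen jewel dodge the vertices already committed, since a vertex of large degree lies in many edges that (by linearity) overlap only at that vertex.

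Concretely, since $d(c) \ge 2$ there is an edge $e_c \ne e$ with $c \in e_c$; linearity forces $e_c \cap e = \{c\}$, so $e_c = \{c, x_1, x_2\}$ with $x_1,x_2 \notin \{a,b,c\}$. Next, $d(b) \ge 4$ gives at least three edges other than $e$ through $b$, and these pairwise meet only in $b$; hence at most one of them contains $x_1$ and at most one contains $x_2$, so at least one remains, giving an edge $e_b = \{b, y_1, y_2\}$ with $e_b \cap e_c = \emptyset$ and $y_1,y_2 \notin \{a,b,c\}$. Finally, $d(a) \ge 6$ gives at least five edges other than $e$ through $a$, again pairwise meeting only in $a$; at most four of them contain one of the four vertices $x_1, x_2, y_1, y_2$, so at least one remains, giving an edge $e_a = \{a, z_1, z_2\}$ disjoint from both $e_b$ and $e_c$.

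Now $e_a, e_b, e_c$ are pairwise disjoint (by construction, and because each avoids $\{a,b,c\}$ outside its designated coordinate), and $e$ meets $e_a$ in $a$, $e_b$ in $b$, and $e_c$ in $c$. Thus the subgraph on the nine vertices $a,b,c,x_1,x_2,y_1,y_2,z_1,z_2$ with edge set $\{e, e_a, e_b, e_c\}$ is isomorphic to the crown, with base $e$ and jewels $e_a, e_b, e_c$. Hence $H$ is not crown-free.

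I do not expect a genuine obstacle here: the argument is a short greedy selection, and the only point requiring care is the bookkeeping — invoking linearity correctly so that at the vertex $b$ only two edges can be "blocked" and at the vertex $a$ only four, which is precisely why the thresholds $\degvec{6,4,2}$ suffice (one forced edge at $c$, then $3 = 2+1$ usable edges at $b$, then $5 = 4+1$ usable edges at $a$).
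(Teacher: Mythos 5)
Your proof is correct and follows essentially the same greedy argument as the paper: pick a jewel through $c$ first, then through $b$ avoiding the two vertices already used, then through $a$ avoiding the remaining four, with linearity guaranteeing each vertex blocks at most one candidate edge. You simply spell out the counting that the paper leaves implicit.
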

    
    \begin{proof}
    Let $e = \{a,b,c\} \in E(H)$ with $D(e)  = \degvec{d(a),d(b),d(c)} \geq \degvec{6,4,2}$. Then there is an edge $f \neq e$ containing $c$, an edge $g$ containing $b$ and not intersecting $f$, and an edge $h$ containing $a$ and not intersecting $f$ or $g$. Therefore, $H$ is not crown-free since $e,f,g, \text{ and } h$ form a crown.
    \end{proof}
    
    Define a \textit{counter-example} to be a crown-free $3$-graph, $H$, with $|E(H)| \geq \frac{5 |V(H)|}{3}$. To prove theorem \ref{main-theorem} we want to show that no such counter-example exists.
    
    \begin{lemma} \label{lemma-min-vertices}
        Any counter-example has a vertex set of at least 11 elements.
    \end{lemma}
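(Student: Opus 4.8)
The plan is to prove this by a direct counting argument that uses only linearity, not crown-freeness at all. Let $H$ be a counter-example, write $n = |V(H)|$ and $m = |E(H)|$, so that $m \geq \tfrac{5n}{3}$ holds by the definition of a counter-example. The goal is to combine this lower bound on $m$ with an upper bound that every linear $3$-graph must satisfy.

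The key step is that upper bound. Since any two distinct edges of $H$ share at most one vertex, no two-element subset of $V(H)$ lies in two different edges; as each edge contains exactly $\binom{3}{2}=3$ such subsets, counting incidences between edges and pairs of vertices gives $3m \leq \binom{n}{2}$. Combining this with the hypothesis $m \geq \tfrac{5n}{3}$ yields
\[ \frac{5n}{3} \;\leq\; m \;\leq\; \frac{1}{3}\binom{n}{2} \;=\; \frac{n(n-1)}{6}, \]
so $5n \leq \tfrac{n(n-1)}{2}$, hence $n - 1 \geq 10$, and therefore $n \geq 11$. (Equivalently, one can phrase the same estimate through degrees: $\sum_{v} d(v) = 3m \geq 5n$, while linearity forces $d(v) \leq \tfrac{n-1}{2}$ for every vertex, since the edges through $v$ contribute $2d(v)$ distinct other vertices.)

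There is essentially no obstacle in this argument; the only point needing a little care is the direction of the linearity count, namely that distinct edges occupy pairwise disjoint triples of vertex-pairs, which is exactly the linearity condition. It is worth noting that the same computation actually rules out $n = 11$ by integrality — there linearity gives $m \leq 18$ while the counter-example hypothesis gives $m \geq 19$ — so in fact a counter-example has at least $12$ vertices; but the stated bound $n \geq 11$ is all that will be needed in what follows.
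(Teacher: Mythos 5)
Your argument is correct and is essentially identical to the paper's proof: both combine the linearity bound $|E(H)| \leq \binom{n}{2}/3 = n(n-1)/6$ with the counter-example hypothesis $|E(H)| \geq 5n/3$ to force $n \geq 11$. Your additional integrality remark ruling out $n=11$ is a valid bonus but not needed.
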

    
    \begin{proof}
    Let $H$ be a counter-example with $|V(H)| = n$. By linearity, $|E(H)| \leq \frac{n(n-1)}{6}$ and since $H$ is a counter-example, $|E(H)| \geq \frac{5n}{3}$. This is only possible if $n \geq 11$. Thus, all counter-examples have at least $11$ vertices.
    \end{proof}
    
    A \textit{minimal counter-example} is a counter-example containing no proper subgraph that is also a counter-example. If $X \subseteq V(H)$ is a set of vertices, let $E_X(H)$ be the set of all edges containing at least one vertex in $X$.
    
    \begin{lemma} \label{lemma-inductive}
        If $H$ is a minimal counter-example, then there does not exist a proper subset $X \subsetneq V(H)$ of vertices such that $|E_X(H)| \leq \frac{5 |X|}{3}$.
    \end{lemma}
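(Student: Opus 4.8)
The plan is to argue by contradiction, exploiting the minimality of $H$ via vertex deletion. Suppose there were a proper subset $X \subsetneq V(H)$ with $|E_X(H)| \le \frac{5|X|}{3}$; we may assume $X \neq \emptyset$, since for $X = \emptyset$ the hypothesis $|E_X(H)| \le 0$ holds trivially and is presumably not the intended case. Form the sub-$3$-graph $H'$ obtained from $H$ by deleting the vertices of $X$ together with every edge meeting $X$, so that $V(H') = V(H) \setminus X$ and $E(H') = E(H) \setminus E_X(H)$. Because $X$ is a \emph{proper} subset, $V(H')$ is nonempty, so $H'$ is a genuine $3$-graph; it is linear and crown-free since it is a subgraph of $H$.

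Next I would verify that $H'$ is itself a counter-example. We have $|V(H')| = |V(H)| - |X|$, and since $E_X(H)$ is exactly the set of edges of $H$ that were removed,
\[
|E(H')| = |E(H)| - |E_X(H)| \ge \frac{5|V(H)|}{3} - \frac{5|X|}{3} = \frac{5\,|V(H')|}{3},
\]
using that $H$ is a counter-example and the assumed bound on $|E_X(H)|$. Thus $H'$ is a crown-free $3$-graph with at least $\frac{5|V(H')|}{3}$ edges, hence a counter-example. Since $X \neq \emptyset$, $H'$ has strictly fewer vertices than $H$ and is therefore a proper subgraph of $H$, contradicting the assumption that $H$ is a minimal counter-example. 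Hence no such $X$ exists.

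There is no genuine obstacle here; it is a clean deletion-and-minimality argument. The only points requiring care are that $H'$ remain a legitimate $3$-graph (which is precisely why $X$ must be proper) and the bookkeeping identity $|E(H')| = |E(H)| - |E_X(H)|$, which is immediate from the definition of $E_X(H)$. It is worth emphasizing that this lemma will serve as the main inductive tool in the sequel: in a minimal counter-example it forces every vertex subset $X$ to satisfy $|E_X(H)| > \frac{5|X|}{3}$, which in turn yields lower bounds on vertex degrees and constrains how edges can overlap, setting up the discharging-style count that drives the proof of Theorem \ref{main-theorem}.
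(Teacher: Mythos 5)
Your proof is correct and is essentially the paper's own argument (the paper states it in one sentence: removing $X$ and $E_X(H)$ yields a proper subgraph that is a counter-example, contradicting minimality); you simply spell out the bookkeeping $|E(H')| = |E(H)| - |E_X(H)| \ge \tfrac{5}{3}(|V(H)| - |X|)$ and the edge case $X = \emptyset$ explicitly. Nothing further is needed.
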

    
    \begin{proof}
        If such an $X$ exists, removing $E_X(H)$ and $X$ gives a proper subgraph that is a counter-example, contradicting $H$ being minimal.
    \end{proof}
    
    It follows from lemma \ref{lemma-inductive} that there are no vertices of degree zero or one in a minimal counter-example. The proof of the following lemma is given in section \ref{sec-proof-lemma-3}.
    
    \begin{lemma} \label{lemma-3}
        If $H$ is a minimal counter-example, then there is no edge $e \in E(H)$ with $D(e) \geq \degvec{5,5,5}$.
    \end{lemma}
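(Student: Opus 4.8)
The plan is to assume toward a contradiction that the minimal counter-example $H$ has an edge $e=\{a,b,c\}$ with $d(a),d(b),d(c)\ge 5$, and to produce either a crown or a set $X$ violating Lemma~\ref{lemma-inductive}. For $v\in e$, the pairs $\{x,y\}$ with $\{v,x,y\}\in E(H)$ form a matching of size $d(v)$ on $V(H)\setminus\{v\}$, and linearity applied to $e$ shows the only such pair for $a$ meeting $\{b,c\}$ is $\{b,c\}$ itself; discarding it leaves a matching $\mathcal A$ of size $d(a)-1\ge 4$ whose pairs avoid $\{a,b,c\}$, and likewise $\mathcal B,\mathcal C$ from $b$ and $c$. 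If there are pairwise disjoint $p_a\in\mathcal A$, $p_b\in\mathcal B$, $p_c\in\mathcal C$ (a \emph{rainbow matching}), then $e$ together with $\{a\}\cup p_a$, $\{b\}\cup p_b$, $\{c\}\cup p_c$ is a crown, a contradiction; so no rainbow matching exists.

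The second step squeezes a rigid structure out of this. Fix $p_a\in\mathcal A$: at most two pairs of $\mathcal B$ and at most two of $\mathcal C$ meet $p_a$, so at least two of each are disjoint from it; but if $p_b\in\mathcal B$ is disjoint from $p_a$, then no pair of $\mathcal C$ disjoint from $p_a$ can also be disjoint from $p_b$, and since $p_b$ meets at most two pairs of the matching $\mathcal C$, exactly two pairs of $\mathcal C$ are disjoint from $p_a$, forcing $|\mathcal C|=4$ and $d(c)=5$. The same pinching gives $|\mathcal B|=4$, $d(b)=5$, and by symmetry $|\mathcal A|=4$, $d(a)=5$. Chasing which vertices get covered then shows $V(\mathcal A)=V(\mathcal B)=V(\mathcal C)=:W$ with $|W|=8$, so $\mathcal A,\mathcal B,\mathcal C$ are three perfect matchings on $W$, no two sharing a pair (a shared pair would make an $\mathcal A$-edge and a $\mathcal B$-edge of $H$ meet in two vertices). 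Thus the edges of $H$ meeting $\{a,b,c\}$ are exactly the $13$ edges $e$, $\{a\}\cup p$ for $p\in\mathcal A$, $\{b\}\cup p$ for $p\in\mathcal B$, $\{c\}\cup p$ for $p\in\mathcal C$, and each $w\in W$ lies in exactly three of them.

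Finally, apply Lemma~\ref{lemma-inductive} with $X:=\{a,b,c\}\cup W$, where $|X|=11$ and $\tfrac{5|X|}{3}=\tfrac{55}{3}>18$; it suffices to show $|E_X(H)|\le 18$, i.e.\ that at most $5$ edges of $H$ meet $W$ beyond the $13$ above. Such an extra edge $g$ contains none of $a,b,c$ (linearity with the structure edges), so it has one, two, or three vertices in $W$. If $g$ has one or two vertices in $W$, pick $w\in W\cap g$ and let $\hat A=\{a,w,u\}$ be the $\mathcal A$-edge through $w$; then $\hat A$ is the base of a crown with jewels $g$ (through $w$), a suitable $\mathcal A$-edge $\{a\}\cup p$ through $a$, and a suitable edge through $u$, the point being that $g$ and the chosen jewel through $u$ between them block at most three of the four $\mathcal A$-edges, and that $u$'s $\mathcal B$-partner and $\mathcal C$-partner are distinct (since $\mathcal B\cap\mathcal C=\emptyset$), so at least one of the $\mathcal B$-edge and the $\mathcal C$-edge through $u$ is disjoint from $g$; this contradiction rules out such $g$. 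Hence every extra edge lies inside $W$; the $13$ structure edges already use $12$ of the $\binom 8 2=28$ pairs inside $W$, and by linearity each extra edge uses $3$ pairs not used before, so there are at most $\floor{16/3}=5$ of them. Therefore $|E_X(H)|\le 18<\tfrac{5|X|}{3}$, contradicting Lemma~\ref{lemma-inductive} if $X\subsetneq V(H)$ and contradicting $|E(H)|\ge\tfrac{5|V(H)|}{3}$ if $X=V(H)$. This proves the lemma.

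The fussiest part should be the crown-finding in the last step: one must run a short case analysis on where the non-$W$ vertices of $g$ sit and check, using only linearity and the pairwise edge-disjointness of $\mathcal A,\mathcal B,\mathcal C$, that the three jewels can be made simultaneously disjoint. Pinning down the rigid structure in the second step is also somewhat finicky, since every ``at most two / at least two'' estimate must pinch to an equality; the rest is routine bookkeeping.
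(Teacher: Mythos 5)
Your proof is correct, and it follows the same basic strategy as the paper's --- rule out a rainbow matching in the link of $e$, extract a rigid structure on the $8$ link vertices, and then play the $11$-element set $X$ against Lemma~\ref{lemma-inductive} --- but the two arguments diverge in how much rigidity they establish and how the endgame is paid for. The paper (Lemma~\ref{triple-five-lemma}) classifies the link graph completely: the $a$- and $b$-colored pairs form two disjoint alternating $4$-cycles and the $c$-colored pairs are their diagonals, so all six pairs inside each $4$-cycle are occupied. Consequently any edge of $E_X(H)\setminus E_e(H)$ has at most one vertex in each $4$-cycle, a single explicit crown disposes of every such edge at once, and the minimality contradiction is the comfortable $|E_X(H)|=13<\frac{55}{3}$. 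You stop at the weaker fact that $\mathcal{A},\mathcal{B},\mathcal{C}$ are three pairwise edge-disjoint perfect matchings on $W$; that suffices for your crown case-analysis to eliminate extra edges meeting $W$ in one or two vertices, but it cannot exclude edges contained entirely in $W$, so you fall back on the pair count $12+3t\le\binom{8}{2}$ to get $|E_X(H)|\le 18<\frac{55}{3}$ --- correct, but with almost no slack. Your explicit handling of the $X=V(H)$ case (where Lemma~\ref{lemma-inductive} does not literally apply and one must instead contradict $|E(H)|\ge\frac{5n}{3}$ directly) is a point the paper glosses over. In short: the paper buys a one-line finish at the price of the full link-graph classification; you trade part of that classification for an extra counting step and a slightly more delicate crown construction, and both routes work.
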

    
    If $e \in E(H)$ and the coordinates of $D(e) = \degvec{x,y,z}$ are summed, we get the value $s(e) = x+y+z$. Define $L(H) \subseteq V(H)$ to be the set of ``large" vertices, of degree 9 or higher. Then, for an edge $e \in E(H)$ with $D(e) = \degvec{x,y,z}$, we define a modified $s(e)$ by:
    
    \begin{equation*}
        s^*(e) = \begin{cases}
        \min\{ s(e), 15 \} &\text{if $x \geq 9$}, \\ s(e) & \text{ otherwise.}
        \end{cases}
    \end{equation*}
    
    If we sum $s^*(e)$ over all of the edges in $H$, we get the value $$T^*(H) = \sum_{e \in E(H) }s^*(e).$$ 
    
    \begin{lemma} \label{lemma-2}
        If $H$ is a minimal counter-example on $n$ vertices, then $$\frac{T^*(H)}{|E(H)|} \geq \frac{25n+14 |L(H)|}{\frac{5n+2}{3}}.$$
    \end{lemma}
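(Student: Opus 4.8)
The plan is to prove the equivalent inequality $T^*(H)\ge\dfrac{3(25n+14\ell)}{5n+2}\,|E(H)|$, where $\ell=|L(H)|$, using that $|E(H)|\ge\tfrac{5n}{3}$ since $H$ is a counter-example. The starting point is the identity $\sum_{e\in E(H)}s(e)=\sum_{v\in V(H)}d(v)^2$: each vertex $v$ contributes $d(v)$ to each of the $d(v)$ edges through it. Since $s^*(e)=s(e)$ unless $e$ meets $L(H)$, the aim is to lower-bound $T^*(H)$ by controlling the correction $\Delta:=\sum_{e}\max\{0,s(e)-15\}$, the sum being over edges meeting $L(H)$.

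The key structural point — and, I expect, the main obstacle — is that one must in fact establish $L(H)=\varnothing$. Indeed, since $s^*(e)\le 15$ for every edge, $T^*(H)\le 15|E(H)|$, whereas the target quantity strictly exceeds $15|E(H)|$ whenever $\ell\ge 1$. Lemma~\ref{lemma-1}, together with the absence of degree-$\le 1$ vertices in a minimal counter-example, does give useful structure — every edge through a vertex $v$ of degree $\ge 9$ has its other two vertices of degree $\le 3$, so $s(e)\le d(v)+6$ and $\max\{0,s(e)-15\}\le d(v)-9$, whence $T^*(H)\ge\sum_{v\notin L(H)}d(v)^2+9\sum_{v\in L(H)}d(v)$ — but this bound alone does not reach the target. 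To rule out large vertices I would argue by contradiction: from a hypothetical vertex $v$ with $d(v)=k\ge 9$, all of whose neighbours then have degree $\le 3$ (and using additional structure such as the absence of edges with two degree-$2$ vertices, a consequence of applying Lemma~\ref{lemma-inductive} to a two-element set), I would build a proper set $X\subseteq V(H)$ — consisting of $v$ and a carefully chosen portion of its low-degree neighbourhood — satisfying $|E_X(H)|\le\tfrac{5}{3}|X|$, which contradicts Lemma~\ref{lemma-inductive}. The delicate part is choosing $X$ so that the edges leaving it are adequately controlled; taking $X$ to be $v$ together with only its first neighbourhood does not suffice.

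With $L(H)=\varnothing$, the remainder is short. Every edge is then non-large, so $T^*(H)=\sum_{e}s(e)=\sum_{v}d(v)^2\ge\dfrac{\bigl(\sum_v d(v)\bigr)^2}{n}=\dfrac{9|E(H)|^2}{n}$ by Cauchy--Schwarz, using $\sum_v d(v)=3|E(H)|$. Dividing by $|E(H)|$, the target becomes $\dfrac{9|E(H)|}{n}\ge\dfrac{75n}{5n+2}$, i.e.\ $|E(H)|\ge\dfrac{25n^2}{3(5n+2)}$, and this follows from $|E(H)|\ge\tfrac{5n}{3}$ because $\tfrac{5n}{3}-\dfrac{25n^2}{3(5n+2)}=\dfrac{10n}{3(5n+2)}>0$. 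This completes the proof.
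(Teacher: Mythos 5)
Your proposal has a genuine gap at exactly the point you yourself flag as ``the main obstacle.'' You correctly observe that in a minimal counter-example $s^*(e)\le 15$ for every edge, so the stated inequality with $|L(H)|\ge 1$ would force a contradiction, and you conclude that one must show $L(H)=\varnothing$. But you only sketch that step: you propose to find, around a hypothetical vertex of degree at least $9$, a set $X$ with $|E_X(H)|\le \tfrac{5}{3}|X|$ contradicting Lemma~\ref{lemma-inductive}, you note that the first neighbourhood does not suffice, and you leave the ``delicate part'' of choosing $X$ unresolved. That unresolved step is the entire content of the lemma in the case $L(H)\neq\varnothing$; without it the proof is incomplete. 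A quick count shows why the first neighbourhood fails: a degree-$9$ vertex whose $18$ neighbours all have degree $3$ gives $|E_X(H)|$ as large as $9+36=45$ against $\tfrac{5}{3}\cdot 19<32$, and it is not clear that the accounting closes on larger neighbourhoods either.

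The paper avoids having to exhibit such an $X$. It proves the inequality directly, for all minimal counter-examples including those with $L(H)\neq\varnothing$, by a redistribution (convexity) argument: since $3|E(H)|=5n+l$ with $l\in\{0,1,2\}$, one starts from a function $f_0$ taking values in $\{5,6,7\}$ with $\sum_v f_0(v)=\sum_v d(v)$ and $\sum_v f_0(v)^2\ge 25n$, and passes to $d$ by steps that each increase $\sum_v f_i(v)^2$; tracking the steps touching a vertex $v$ of degree $m\ge 9$ shows that it contributes at least $m^2-9m+14$ beyond the baseline to $\sum_v d(v)^2=\sum_e s(e)$, while the capping loss $\sum_{e\ni v}\big(s(e)-s^*(e)\big)$ is at most $m(m+6-15)=m^2-9m$ because every edge at $v$ has its other two degrees at most $3$. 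The net gain of $14$ per large vertex yields $T^*(H)\ge 25n+14|L(H)|$, and the emptiness of $L(H)$ is then deduced afterwards, in the proof of Theorem~\ref{main-theorem}, rather than assumed. Your Cauchy--Schwarz computation for the case $L(H)=\varnothing$ is correct (and is essentially the paper's bound $T_k\ge T_0\ge 25n$ in disguise), but on its own it proves only the $|L(H)|=0$ instance of the lemma.
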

    
    Lemma \ref{lemma-2} is proven in section \ref{sec-proof-lemma-2}. We now give the proof of Theorem \ref{main-theorem}.
    
    \begin{proof}
    Suppose for contradiction that there exists a counter-example. Let $H$ be a (possibly non-proper) subrgraph that is a minimal counter-example. If $L(H)$ is not empty, then $\frac{T^*(H)}{|E(H)|} > 15$ by lemma \ref{lemma-2} so there exists an $e \in E(H)$ such that $s^*(e) \geq 16$. By definition of $s^*$ there is no vertex $v \in e \cap L(H)$ and by lemma \ref{lemma-inductive} there is no vertex in $e$ of degree one. However, then lemma \ref{lemma-1} tells us there is a crown in $H$, a contradiction.
    
    Now, suppose that $L(H)$ is empty. Then, by lemmas \ref{lemma-min-vertices} and \ref{lemma-2}, $\frac{T^*(H)}{|E(H)|} > 14$ so there exists an $e \in E(H)$ such that $s^*(e) \geq 15$. By assumption, there is no vertex $v \in e \cap L(H)$. There is also no vertex of degree one in $e$ and by lemma \ref{lemma-3}, $D(e) \neq \degvec{5,5,5}$. Then lemma $\ref{lemma-1}$ again gives a contradiction. Therefore, no counter-example exists, proving Theorem \ref{main-theorem}.
    \end{proof}

    \section{Proof of Lemma \ref{lemma-3}} \label{sec-proof-lemma-3}
    
    \begin{definition}[Link graph] \label{linkgraph}
         Let $H$ be a $3$-graph and $e = \{a, b, c\} \in E(H)$. The \textit{link graph} of $e$ in $H$ is the graph, $G(e)$, with a proper coloring, $\varphi$, of the edges defined as follows. The vertex set of $G(e)$ is all vertices covered by $E_e(H)$ minus $a$, $b$, and $c$. The edge set is $\{y,z\}$ for $\{x,y,z\} \in E_e \setminus \{e\}$ where $x \in e$. Such an edge, $\{y,z\}$, is assigned color $\varphi(\{y,z\}) = x$.
     \end{definition}
     
     A \textit{rainbow matching} in $G(e)$ is a set of three pairwise disjoint edges representing all three edge colors. There is a crown in $H$ with base $e$ if and only if there is a rainbow matching in $G(e)$.
     
     \begin{lemma} \label{triple-five-lemma}
        Let $H$ be a crown-free $3$-graph and $e = (a, b, c) \in E(H)$ with $D(e) = \degvec{5, 5, 5}$. Then $G(e)$ is isomorphic to the graph, $G$ (see Figure \ref{fig-link}).
    \end{lemma}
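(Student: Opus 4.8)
The plan is to recast the statement purely in terms of the link graph and then classify the possibilities by hand. Since $d(a)=d(b)=d(c)=5$, the first step is to record the structure of $G(e)$ forced by linearity. For each $x\in\{a,b,c\}$, the edges of $H$ through $x$ other than $e$ contribute $d(x)-1=4$ colored edges to $G(e)$, and any two edges of $H$ through $x$ meet only in $x$, so these four edges form a matching $M_x$ of size $4$ on $W:=V(G(e))$. Moreover, no pair of vertices can receive two different colors, since $\{a,y,z\},\{b,y,z\}\in E(H)$ would share the two vertices $y,z$, contradicting linearity. Hence $G(e)$ is the union of three edge-disjoint matchings $M_a,M_b,M_c$, each with exactly four edges, and --- as noted after Definition~\ref{linkgraph} --- crown-freeness of $H$ is equivalent to $G(e)$ having no rainbow matching of size $3$. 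So the lemma reduces to showing that every such configuration is isomorphic to $G$.

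Next I would set up the two tools that drive the classification. (i) Between any two of the color classes there is a rainbow $2$-matching: if every edge of $M_b$ met every edge of $M_a$, then a fixed edge of $M_b$, having only two vertices, could meet at most two of the four pairwise disjoint edges of $M_a$ --- impossible. (ii) If $f_a\in M_a$ and $f_b\in M_b$ are disjoint, set $T=V(f_a)\cup V(f_b)$, a $4$-set; since $H$ has no crown with base $e$, every edge of $M_c$ must meet $T$, and as $M_c$ is a matching of four edges it follows that each edge of $M_c$ meets $T$ in exactly one vertex and that $M_c$ covers all of $T$. The analogous statements hold under every permutation of $\{a,b,c\}$. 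Thus each rainbow $2$-matching, in any two colors, produces a $4$-element transversal that is pierced exactly once by each edge of the third matching.

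The classification itself is the bulk of the work. Fix a rainbow $2$-matching $f_a,f_b$ with transversal $T$, and write $M_c=\{g_1,g_2,g_3,g_4\}$ with $g_i\cap T=\{t_i\}$ and ``new'' endpoint $t_i'\notin T$. Each vertex of $f_a$ lies in $T$, so exactly two of the $g_i$ are disjoint from $f_a$; pairing $f_a$ with one of them and invoking tool (ii) in colors $\{a,c\}$ constrains how $M_b$ meets $T$ and the vertices $t_i'$, and likewise for colors $\{b,c\}$ and for every further rainbow $2$-matching available. I would organize the analysis by how $M_a$ and $M_b$ overlap on $T$ and on the new vertices, and push these incidence constraints until a single incidence pattern among the three matchings remains; checking in addition that $W$ can contain no further vertices (every vertex of $W$ lies on a colored edge, and all endpoints have by then been accounted for) yields an isomorphism with $G$. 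The main obstacle is precisely this step: there are many a priori overlap patterns, and one must verify that each either already contains a rainbow $3$-matching or is a relabeling of $G$. Keeping the case tree under control --- by always extracting a fresh rainbow $2$-matching and playing its transversal against the third matching --- is the crux of the argument.
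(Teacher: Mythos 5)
Your setup is correct as far as it goes: $G(e)$ is the union of three edge-disjoint $4$-edge matchings $M_a,M_b,M_c$, crown-freeness with base $e$ is equivalent to the absence of a rainbow $3$-matching, tool (i) (any two colors admit a disjoint pair) and tool (ii) (the $4$-set $T$ spanned by a rainbow $2$-matching is pierced exactly once by each edge of the third matching, which covers all of $T$) are both valid. But the lemma is not proved. Everything after ``The classification itself is the bulk of the work'' is a plan, not an argument: you do not exhibit the case tree, you do not show that each overlap pattern of $M_a$ and $M_b$ on $T$ either forces a rainbow $3$-matching or collapses to $G$, and you do not actually verify that $V(G(e))$ has only $8$ vertices (a priori the twelve colored edges could introduce many more ``new'' endpoints $t_i'$). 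You explicitly flag this classification as ``the main obstacle'' and ``the crux of the argument,'' and it is precisely the part that is missing. As submitted, the proposal establishes some true constraints on $G(e)$ but does not establish the uniqueness claim that the lemma asserts.

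For comparison, the paper closes this gap by strengthening your tool (i) from an existence statement to an exact count: \emph{every} edge of color $i$ meets \emph{exactly two} edges of color $j$. (If some $f\in M_a$ met at most one edge of $M_b$, then three edges of $M_b$ and at least two edges of $M_c$ are disjoint from $f$; a $c$-edge disjoint from $f$ would have to meet all three of those $b$-edges with only two vertices.) This single observation forces $M_a\cup M_b$ to be an alternating $8$-cycle or two disjoint alternating $4$-cycles on exactly $8$ vertices, after which the $8$-cycle case dies immediately and in the two-$4$-cycle case the four $c$-edges must be the four diagonals, giving $G$. If you replace your open-ended case analysis with this exact-intersection count, your approach becomes essentially the paper's proof; without it (or an equivalent completed classification), the argument has a genuine hole.
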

    
    
    \begin{figure}[H]
        \centering
        \includegraphics[width=.7\linewidth]{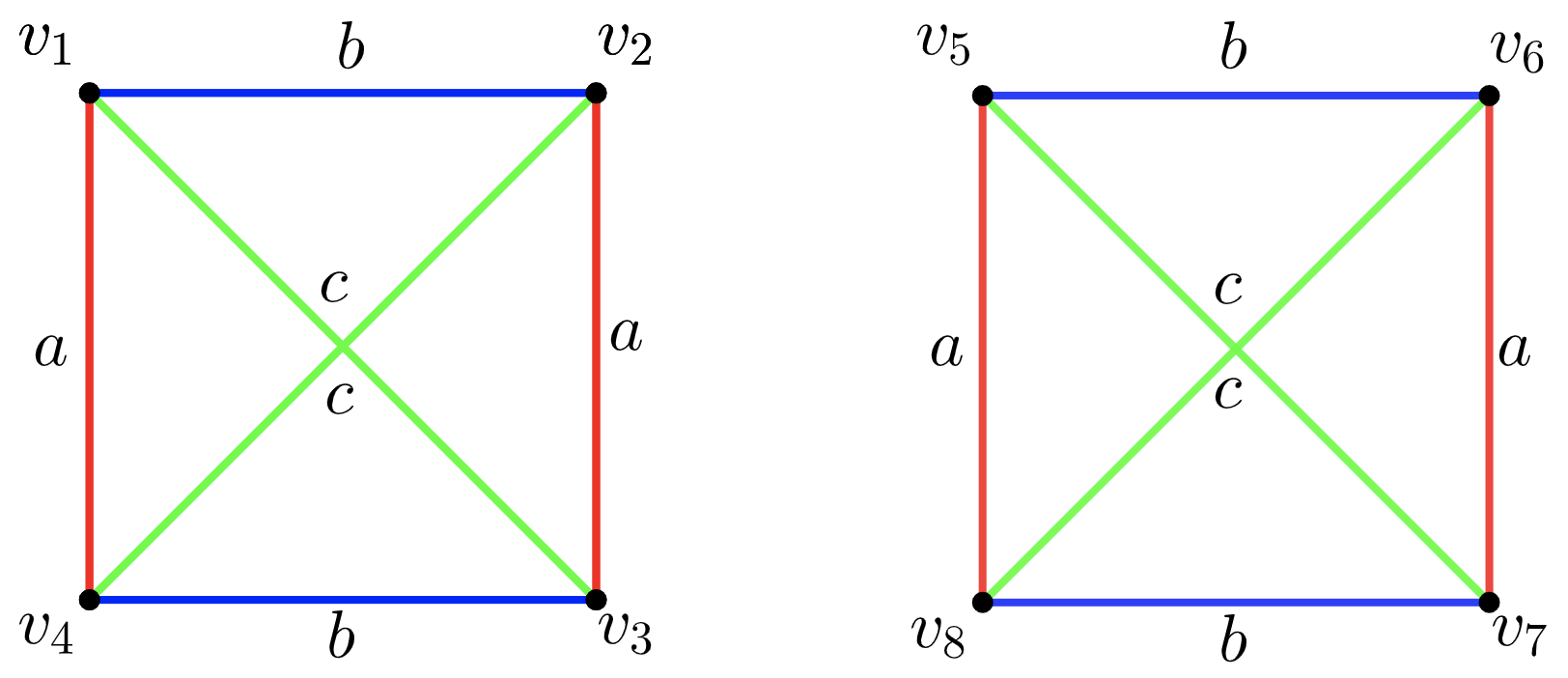}
        \caption{Link Graph G}
        \label{fig-link}
    \end{figure}
    
    \begin{proof}
    
    Given $i,j \in \{a, b, c \}$ where $i \neq j$, every edge of color $i$ must intersect two edges of color $j$, otherwise there is a rainbow matching. Therefore, the $a-$ and $b$-colored edges either form an alternating $8$-cycle or two disjoint alternating $4$-cycles. In the former case, any $c$-colored edge gives a rainbow matching. In the latter case, the only possible $c$-colored edges are the diagonals of the $4$-cycles, in which case $G(e)$ is isomorphic to $G$.
    \end{proof}

    Let $H$ be a minimal counter-example. Suppose for contradiction that there exists an edge $e = \{ a,b,c \} \in E(H)$ with $D(e) \geq \degvec{5,5,5}$. By lemma \ref{lemma-1}, $D(e) = \degvec{5,5,5}$ so $G(e)$ is isomorphic to $G$. Let $X = \{a, b, c \} \bigcup_{i=1}^8 \{v_i\}$ where $v_i$ are the vertices of $G$ (see figure \ref{fig-link}). If $E_X(H) = E_e(H)$, then
    
    $$ |E_X(H)| = |E_e(H)| = 13 < \frac{5 |X|}{3} $$
    contradicting $H$ being minimal by lemma \ref{lemma-inductive}.
    
    Thus, there exists an edge $f \in E_X(H) \setminus E_e(H)$. The edge $f$ may not contain any vertex from $e$, but must contain at least one $v_i$. By linearity and symmetry, we may assume $f = \{v_1, w_1, w_2 \}$ where $w_2 \not \in V\big(G(e)\big)$ and either $w_1 = v_5$ or $w_1 \not \in V\big(G(e)\big)$. In either case, there is a crown in $H$ with base 
    $$ \{v_1, v_4, a\} $$
    and jewels
    $$ \{v_1, w_1, w_2\} \text{, } \{v_2, v_4, c\} \text{, and } \{v_6, v_7, a\} \text{,} $$
    contradicting $H$ being a counter-example.\qed
    
    \section{Proof of Lemma \ref{lemma-2}}
    \label{sec-proof-lemma-2}
    
    Let $H$ be a minimal counter-example on $n$ vertices. By lemma \ref{lemma-inductive} there are no degree one vertices in $H$. Since $H$ is minimal, $$\sum_{v \in V(H)} d(v) = 5n + l$$ where $l \in \{ 0, 1, 2 \}$.
    
    \begin{lemma}
    For some $k > 0$ there exists a sequence of functions $\{f_i\}_{i=0}^k: V(H) \rightarrow \N$ and a partition of $V(H)$ into $I$ and $D = V(H) \setminus I$ such that:
    
    \begin{enumerate}
        \item For all $v \in V(H)$, $5 \leq f_0(v) \leq 7$.
        \item $f_k = d$ where $d$ is the degree function.
        \item For $ 1 \leq i \leq k$, there exists $x_i \in I$ and $y_i \in D$ such that $f_{i-1}(x_i) \geq f_{i-1}(y_i) $, $f_i(x_i) = f_{i-1}(x_i) + 1$, $f_i(y_i) = f_{i-1}(y_i) - 1$, and for all $z \in V(H) \setminus \{x_i, y_i\}$, $f_i(z) = f_{i-1}(z)$.
        \item If $v \in D$, $f_0(v) = 5$.
    \end{enumerate}
    
    \end{lemma}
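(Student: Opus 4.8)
The plan is to build $f_0$ as an explicit rounding of the degree function into the window $[5,7]$, taking $D$ to be the low-degree vertices, and then to transform $f_0$ into $d$ by a greedy sequence of unit transfers whose legality is forced by a monotonicity invariant.

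First I would set $D=\{v\in V(H):d(v)\le 5\}$ and $I=V(H)\setminus D=\{v:d(v)\ge 6\}$, and put $f_0(v)=5$ for every $v\in D$; this is precisely condition (4). For $v\in I$ I would start from the base value $5$ as well and then distribute the surplus $l\in\{0,1,2\}$ one unit at a time, where a vertex $v\in I$ is allowed to absorb at most $\min\{2,\,d(v)-5\}$ units — so that afterwards $5\le f_0(v)\le 7$ and $f_0(v)\le d(v)$. The one thing to check is that the total capacity is at least $l$: since $d\le 5$ on $D$ one has $\sum_{v\in I}(d(v)-5)=\bigl(5n+l-\sum_{v\in D}d(v)\bigr)-5|I|\ge l$, and then either some $v\in I$ already has $d(v)-5\ge 2\ge l$, or every $v\in I$ has $d(v)-5=1$ and the capacity equals $|I|=\sum_{v\in I}(d(v)-5)\ge l$. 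This gives a legal $f_0$ satisfying conditions (1) and (4), with $f_0(v)\le d(v)$ on $I$, $f_0(v)=5\ge d(v)$ on $D$, and $\sum_v f_0(v)=5n+l=\sum_v d(v)$. I would also note at this point that $I\ne\emptyset$: by Lemma~\ref{lemma-3} every edge has an endpoint of degree $\le 4$, so there is a vertex $v_0$ with $2\le d(v_0)\le 4$ (degree $\ge 2$ by Lemma~\ref{lemma-inductive}), and if all degrees were $\le 5$ we would get $\sum_v d(v)\le 4+5(n-1)<5n$, contradicting $\sum_v d(v)=5n+l$.

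Next I would construct $\{f_i\}$ inductively while maintaining the invariant $f_i(v)\le d(v)$ for $v\in I$ and $f_i(v)\ge d(v)$ for $v\in D$, which holds at $i=0$. Since each step only raises a vertex of $I$ and only lowers a vertex of $D$, this invariant also forces $f_i(v)\ge 5$ on $I$ and $f_i(v)\le 5$ on $D$. Given $f_{i-1}\ne d$, the equality $\sum_v f_{i-1}(v)=\sum_v d(v)$ yields a vertex with $f_{i-1}(v)>d(v)$ — necessarily in $D$, call it $y_i$ — and a vertex with $f_{i-1}(v)<d(v)$ — necessarily in $I$, call it $x_i$. The key point is that the order requirement is automatic: $f_{i-1}(x_i)\ge 5\ge f_{i-1}(y_i)$. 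Letting $f_i$ be $f_{i-1}$ with $x_i$ incremented and $y_i$ decremented preserves the invariant (because $f_{i-1}(x_i)<d(x_i)$ and $f_{i-1}(y_i)>d(y_i)$) and keeps all values in $\N$ (indeed $f_i(y_i)\ge d(y_i)\ge 2$). To see the process terminates, track $\Phi(f)=\sum_v|f(v)-d(v)|$: it is even since $\sum_v(f_0(v)-d(v))=0$, each transfer lowers it by exactly $2$, and it is positive at $f_0$ because $f_0(v_0)=5>d(v_0)$. Hence after $k=\Phi(f_0)/2\ge 1$ transfers we reach $f_k=d$, giving condition (2) and $k>0$.

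The only genuinely delicate point is the capacity inequality that makes a legal $f_0$ exist, together with the small but essential observation that $I$ is nonempty (so the transfers have somewhere to go), which is where Lemma~\ref{lemma-3} enters; everything after that is a routine monovariant argument, so I anticipate no serious obstacle.
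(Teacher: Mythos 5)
Your proof is correct and follows essentially the same strategy as the paper's: start from a near-constant $f_0\approx 5$ whose values sum to $5n+l$, then equalize toward $d$ by unit transfers from surplus to deficit vertices. The only differences are bookkeeping — you fix $I$ and $D$ a priori by the threshold $d(v)\ge 6$ and place the surplus $l$ via a capacity argument, whereas the paper sorts the vertices, puts the surplus on the top one or two, and defines $D$ a posteriori as the vertices that ever get decreased — and your invariant $f_i\le d$ on $I$, $f_i\ge d$ on $D$ makes the verification of conditions (3) and (4) somewhat more explicit than the paper's.
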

    
    \begin{proof}
    Label the vertices in $V(H)$ as $v_1$ through $v_n$ so that $\{ d(v_i) \}_{i=1}^n$ is in non-decreasing order. First, suppose $l = 0$. Then, let $f_0(v_i) = 5$ for all $v \in V(H)$. To define $f_i$ for $i > 0$, assume $f_{i-1}$ is already defined. Take the minimal $a$ such that $f_{i-1}(v_a) > d(v_a)$ and the maximal $b$ such that $f_{i-1}(v_b) < d(v_b)$. If no such $a$ and $b$ exist, then $i-1 = k$ and we are done. Otherwise, define $f_i(v_a) = f_{i-1}(v_a) - 1$, $f_i(v_b) = f_{i-1}(v_b) + 1$, and $f_i(v_c) = f_{i-1}(v_c)$ for $c \not \in \{a,b\}$. Eventually, we reach an $f_i = d$. Any given vertex may be either decreased (one or more times) and is in the set $D$ or is never decreased and is in the set $I$. If $l = 1$, do the same except with $f_0(v_n) = 6$. If $l = 2$ and there is a vertex $v \in V(H)$ with $d(v) \geq 7$, let $f_0(v_n) = 7$. If no such vertex exists, then there are at least two vertices of degree six so let $f_0(v_n) = f_0(v_{n-1}) = 6$ and the final criteria must still be satisfied.
    \end{proof}
    
    Consider such a sequence of functions. We now define several things. For $0 \leq i \leq k$, let $$T_i = \sum_{v \in V(H)} f_i(v)^2.$$ For $1 \leq i \leq k$, let $\Delta_i = T_i - T_{i-1}$. For $v \in V(H)$, let $I_v = \{1 \leq i \leq k: f_i(v) \neq f_{i-1}(v) \}$. For $v \in V(H)$, define $$\Delta_v = \sum_{i \in I_v} \Delta_i.$$ For a fixed $i$, $1 \leq i \leq k$, there is one $v \in V(H)$ such that $f_i(v) > f_{i-1}(v)$. Let $g(i) = f_i(v)^2 - f_{i-1}(v)^2$. Similarly, define $h$ so that $g(i) - h(i) = \Delta_i$. Note that $$T_k = \sum_{v \in V(H)} d(v)^2 = \sum_{e \in E(H)} s(e)$$ and $\Delta_i > 0$ for all $1 \leq i \leq k$.
    
    \begin{lemma}
     If $v \in L(H)$ and $d(v) = m$, then $\Delta_v \geq m^2-9m+14$.
    \end{lemma}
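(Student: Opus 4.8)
The plan is to first observe that a large vertex must lie in $I$, and then to track exactly what each increment to it contributes to $\Delta_v$. Since $d(v) = m \ge 9$ while every vertex of $D$ has $f_0$-value $5$ by property (4) and is only ever decreased (at each step it can play the role of $y_i$ but never $x_i$, so its $f$-value is non-increasing), a vertex of $D$ never exceeds $5 < m = f_k(v)$; hence $v \in I$. Consequently $v$ is never decreased, it is incremented exactly $r := m - f_0(v)$ times, say at steps $i_1 < \cdots < i_r$, and $I_v = \{i_1, \dots, i_r\}$; moreover, just before its $j$-th increment, $v$ has value $f_{i_j - 1}(v) = f_0(v) + (j-1)$.

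The key per-step estimate is as follows. Expanding $T_i - T_{i-1}$ using that only $x_i$ and $y_i$ change, one gets $\Delta_i = (2 f_{i-1}(x_i) + 1) - (2 f_{i-1}(y_i) - 1) = 2\big(f_{i-1}(x_i) - f_{i-1}(y_i)\big) + 2$. At step $i_j$ the incremented vertex is $v$, with $f_{i_j-1}(v) = f_0(v) + j - 1$, and the decremented vertex $y_{i_j}$ lies in $D$, so $f_{i_j - 1}(y_{i_j}) \le f_0(y_{i_j}) = 5$ by the structural facts above. This yields $\Delta_{i_j} \ge 2\big(f_0(v) + j - 1 - 5\big) + 2 = 2 f_0(v) + 2j - 10$, which is already at least $2$ for every $j \ge 1$ since $f_0(v) \ge 5$, so summing this single bound suffices.

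Summing over $j = 1, \dots, r$ with $r = m - f_0(v)$ and simplifying gives $\Delta_v \ge \sum_{j=1}^{m - f_0(v)} (2f_0(v) + 2j - 10) = (m - f_0(v))(m + f_0(v) - 9) = m^2 - 9m + f_0(v)\big(9 - f_0(v)\big)$. Since $f_0(v) \in \{5, 6, 7\}$ by property (1), the term $f_0(v)(9 - f_0(v))$ is minimized at $f_0(v) = 7$, where it equals $14$, so $\Delta_v \ge m^2 - 9m + 14$. I expect the only genuinely delicate part to be the bookkeeping around $D$: one must use both property (4) (that such vertices start at $5$) and the fact that they are never incremented, in order to conclude $f_{i_j - 1}(y_{i_j}) \le 5$; and it is precisely the bound $f_0(v) \le 7$ from property (1), realized in the worst case $f_0(v) = 7$, that pins the constant at exactly $14$ rather than something larger.
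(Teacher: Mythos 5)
Your proof is correct and follows essentially the same approach as the paper: you bound the gain at each increment step of $v$ and the loss at the corresponding decrement of a $D$-vertex (your $2f_{i-1}(y_{i_j})-1\le 9$ is exactly the paper's $h(i)\le 5^2-4^2=9$), and your summed bound $(m-f_0(v))(m+f_0(v)-9)$ is identical to the paper's $m^2-x^2-9(m-x)$, minimized at $x=7$. The only difference is cosmetic: you make explicit the facts that $v\in I$ and that $D$-vertices never exceed $5$, which the paper leaves implicit.
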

    
    \begin{proof}
     We have $\Delta_v = \sum_{i \in I_v} \big(g(i)-h(i)\big)$. Now, $f_0(v) = x \in \{ 5,6,7 \}$ and $f_k(v) = m$ so $\sum_{i \in I_v} g(i) = m^2 - x^2$. For any $i \in I_v$, $h(i) \leq 5^2 - 4^2 = 9$. Thus, $\sum_{i \in I_v} h(i) \leq 9 |I_v| = 9 (m-x)$. Therefore, $\Delta_v \geq m^2 - x^2 - 9(m-x)$ which is minimal when $x=7$, giving $\Delta_v \geq m^2-9m+14$.
    \end{proof}
    
    \begin{lemma}
     If $v \in L(H)$ and $d(v) = m$, then $$\sum_{e \in E_v} \big( s(e) - s^*(e) \big ) \leq m^2 - 9m.$$
    \end{lemma}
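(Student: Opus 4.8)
The plan is to bound each summand $s(e)-s^*(e)$ separately and then sum over the $d(v)$ edges through $v$. Fix $v\in L(H)$ with $d(v)=m\ge 9$, and let $e=\{v,b,c\}\in E_v$ with $D(e)=\degvec{x,y,z}$. The starting observation is that $x=\max\{d(v),d(b),d(c)\}\ge m\ge 9$, so by definition $s^*(e)=\min\{s(e),15\}$; hence $s(e)-s^*(e)=\max\{0,\,s(e)-15\}$. Thus everything reduces to finding a uniform upper bound on $s(e)$ valid for every edge containing $v$.

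To get that bound I would use Lemma \ref{lemma-1} together with the structural facts already available for a minimal counter-example. Since $H$ is crown-free, no edge $f$ has $D(f)\ge\degvec{6,4,2}$, and by Lemma \ref{lemma-inductive} there are no vertices of degree $0$ or $1$. For our edge $e$ we have $x\ge m\ge 6$ and $z\ge 2$; if in addition $y\ge 4$, then $D(e)\ge\degvec{6,4,2}$, a contradiction. Hence $y\le 3$, so $y,z\in\{2,3\}$, which moreover forces $x=d(v)=m$ (as $m\ge 9>3$ cannot be one of the two smaller coordinates). Consequently $s(e)=m+d(b)+d(c)\le m+6$.

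Combining the two steps, for every $e\in E_v$ we get $s(e)-s^*(e)\le\max\{0,(m+6)-15\}=m-9$, using $m\ge 9$ to drop the maximum. Since $|E_v|=d(v)=m$, summing over all edges through $v$ yields
\[
\sum_{e\in E_v}\bigl(s(e)-s^*(e)\bigr)\ \le\ m(m-9)\ =\ m^2-9m,
\]
which is the claim.

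I do not anticipate a real obstacle: the argument is short once one notices that the crown-free condition, via Lemma \ref{lemma-1}, pins the two non-$v$ vertices of each edge through $v$ to degree $2$ or $3$. The only points needing a little care are the bookkeeping that $s^*(e)$ is the truncation at $15$ (because some coordinate of $D(e)$ is $\ge 9$) and the observation that $v$ realizes the maximum degree in each of its edges, both of which follow immediately from $y,z\le 3<9\le m$.
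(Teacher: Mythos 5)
Your proof is correct and follows essentially the same route as the paper: use Lemma \ref{lemma-1} (together with the absence of degree-one vertices) to force the two non-$v$ vertices of each edge through $v$ to have degree at most $3$, conclude $s(e)\le m+6$, and sum the resulting bound $s(e)-s^*(e)\le m-9$ over the $m$ edges in $E_v$. Your write-up is just slightly more explicit than the paper's about why $s(e)-s^*(e)=\max\{0,s(e)-15\}$ and where $m\ge 9$ is used.
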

    
    \begin{proof}
    Since $d(v) \geq 9$, the other two vertices in an edge with $v$ have maximum degree $3$ by lemma \ref{lemma-1}. Thus, if $e \in E_v$, $s(e) \leq m + 6$. Since there are $m$ edges in $E_v$, we have $\sum_{e \in E_v} \big ( s(e) - s^*(e) \big ) \leq m(m+6-15) = m^2 - 9m$.
    \end{proof}
    
    These two lemmas now give:
    
    \begin{align*}
        T^*(H) & = T_k - \sum_{v \in L(H)}\Big(\sum_{e \in E_v} \big (s(e) - s^*(e)\big)\Big) \\
        & \geq T_0 + \sum_{v \in L(H)} \Delta_v - \sum_{v \in L(H)}\Big(\sum_{e \in E_v} \big(s(e) - s^*(e)\big)\Big) \\
        & = T_0 + \sum_{v \in L(H)} \Big(\Delta_v - \sum_{e \in E_v} \big(s(e) - s^*(e)\big)\Big) \\
        & \geq T_0 + \sum_{v \in L(H)} \Big(\big(d(v)^2-9d(v)+14\big) - \big(d(v)^2 - 9d(v)\big)\Big) \\
        & = T_0 + \sum_{v \in L(H)} 14 \\
        & \geq 25n + 14 |L(H)|
    \end{align*}
    
    Since $|E(H)| = \frac{5n+l}{3} \leq \frac{5n+2}{3}$,
    
    $$ \frac{T^*(H)}{|E(H)|} \geq \frac{25n+14 |L(H)|}{\frac{5n+2}{3}}. $$ \qed
    
    \section{Acknowledgements}
    
    The work presented here was done in part at the Budapest Semesters in Mathematics Summer Undergraduate Research Program in 2021. Thanks to Andr\'as Gy\'arf\'as for introducing me to this problem and providing guidance.

\end{document}